\newtheorem{theorem}{Theorem}[section]
\newtheorem{lemma}[theorem]{Lemma}
\theoremstyle{definition}
\newtheorem{definition}[theorem]{Definition}
\newtheorem{remark}[theorem]{Remark}
\numberwithin{equation}{section}
\begin{document}
\title{on the \'{e}tale fundamental groups of arithmetic schemes}
\author{Feng-Wen An}
\address{School of Mathematics and Statistics, Wuhan University, Wuhan,
Hubei 430072, People's Republic of China}
\email{fwan@amss.ac.cn}
\subjclass[2000]{Primary 14F35; Secondary 11G35}
\keywords{arithmetic scheme, automorphism group, \'{e}tale fundamental
group, Galois group, quasi-galois}

\begin{abstract}
In this paper we will give the computation of the \'{e}tale fundamental
group of an arithmetic scheme.
\end{abstract}

\maketitle

\begin{center}
{\tiny {Contents} }
\end{center}

{\tiny \qquad {Introduction} }

{\tiny \qquad {1. Statement of The Main Theorem} }

{\tiny \qquad {2. Preliminaries}}

{\tiny \qquad {3. Construction for the Model}}

{\tiny \qquad {4. Proof of The Main Theorem} }

{\tiny \qquad {References}}

\section*{Introduction}

It has been seen that
 \'{e}tale fundamental groups of arithmetic varieties encode the whole of
the information of the maximal abelian class fields of the number fields (e.g., \cite{Bloch,Kato,Schmidt,Raskind,Saito,VS1,w1,w2}). Here one uses the related data of arithmetic varieties to describe class
fields for a given field.

In particular, one uses the
varieties $X/Y$  as a geometric model for describing such
extensions $E/F$ that the Galois group $Gal\left( E/F\right) $ is isomorphic
to the automorphism group ${Aut}\left( X/Y\right) $ (e.g., \cite%
{SGA1,Raskind,VS1,SV2}).

In this paper we will give the computation of the \'{e}tale fundamental
group of an arithmetic scheme. Here the key point to overcome is such as the following:

For any arithmetic variety $X$, there is a directed set $X_{qc}[\Omega]$ made up of quasi-galois closed
covers over $X$ which have reduced affine coverings with values in  $\Omega^{un}$, satisfying the properties
\begin{itemize}
\item $X_{qc}[\Omega]$ is a subset of $X_{et}[\Omega]$;

\item $X_{qc}[\Omega]$ and $X_{et}[\Omega]$ are cofinal directed sets.
\end{itemize}
Here $\Omega$ is a fixed algebraic closure of the function field $k(X)$, $\Omega^{un}$ is the subfield consisting of finite unramified extensions over $k(X)$, and
$X_{et}[\Omega]$ is the set of finite \'{e}tale Galois covers over $X$ preserving the given geometric point $s$ of $X$ over $\Omega$.

\subsection*{Acknowledgment}

The author would like to express his sincere gratitude to Professor Li
Banghe for his advice and instructions on algebraic geometry and topology.

\section{Statement of the Main Theorem}

\subsection{Notation and Definition}

By an \textbf{arithmetic variety} $X$ in the paper, we will understand an
integral scheme surjectively over $Spec\left( \mathbb{Z}\right) $ of finite type.

Let $\pi _{1}^{et}\left( X,s\right) $ denote the \'{e}%
tale fundamental group of $X$ for a given geometric point $s$ of $X.$

As usual, let $k(Z)$ ($\triangleq \mathcal{O}_{Z,\xi}$) denote the function field of an integral scheme $Z$ (with generic point $\xi$); let $Gal(K_{2}/K_{1})$ denote the Galois group of the field extension $K_{2}/K_{1}$.

\begin{definition}
Let $K_{1}\subseteq K_{2}$ be two finitely generated function fields over a
number field $K.$

$\left( i\right) $ $K_{2}$ is said to be a
finite \textbf{unramified Galois} extension of $K_{1}$ if there are two arithmetic varieties $X_{1}$
and $X_{2}$ and a surjective morphism $f:X_{2}\rightarrow X_{1}$ such that
\begin{itemize}
\item $k\left( X_{1}\right) =K_{1},k\left( X_{2}\right) =K_{2}$;

\item $X_{2}$ is a
finite \'{e}tale Galois cover of $X_{1}$ by $f$.
\end{itemize}

$\left( ii\right) $ $K_{2}$ is said to be a finite \textbf{%
unramified} extension of $K_{1}$ if there is a finitely generated function
field $K_{3}$ over $K$ such that $K_{2}$ is contained in $K_{3}$ and $K_{3}$
is a finite unramified Galois extension of $K_{1}.$
\end{definition}

Now let $L$ be a function field over a number field $K$ (not necessarily finitely generated). Set
\begin{itemize}
\item $L^{al}\triangleq $
an algebraical closure of $L$;

\item $L^{un}\triangleq$
the union of all the finite unramified subextensions (in $L^{al}$) over $L.$
\end{itemize}

\begin{remark}
Let $L$ be a function field over a number field $K$.

$(i)$ If $L/K$ is algebraic, $L^{un}$ is exactly the case in algebraic number theory.

$(ii)$ By the main result in \cite{An5}, it can be seen that $L^{un}/L$ and $L^{al}/L^{un}$ are both Galois extensions.
\end{remark}

\subsection{Statement of the Main Theorem}

Here is the main result of the paper.

\begin{theorem}
Fixed any arithmetic variety $X$. Then there exists an isomorphism
\begin{equation*}
\pi _{1}^{et}\left( X,s\right) \cong Gal\left( {k(X)}^{un}/k\left( X\right)
\right)
\end{equation*}
between groups
for any geometric point $s$ of $X$ over an algebraic closure of the function
field $k\left( X\right) .$
\end{theorem}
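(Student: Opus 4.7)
The plan is to realize both sides of the desired isomorphism as inverse limits indexed by cofinal directed systems, identify those systems with each other, and verify that the transition data matches. By definition,
\[
\pi _{1}^{et}\left( X,s\right) =\varprojlim_{Y\in X_{et}[\Omega]}Aut(Y/X),
\]
and since $X_{qc}[\Omega]$ is cofinal in $X_{et}[\Omega]$ (as recalled in the introduction), it suffices to take the inverse limit over the subsystem $X_{qc}[\Omega]$. On the Galois side,
\[
Gal\left({k(X)}^{un}/k(X)\right) =\varprojlim_{L}Gal(L/k(X)),
\]
where $L$ ranges over the finite unramified Galois subextensions of ${k(X)}^{un}/k(X)$ inside $\Omega^{un}$.

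The next step is to construct a bijection between $X_{qc}[\Omega]$ and this family of subextensions. Given $Y\in X_{qc}[\Omega]$, the function field $k(Y)\subseteq\Omega^{un}$ is, by Definition 1.1(i) and the \'{e}tale Galois hypothesis, a finite unramified Galois extension of $k(X)$. Conversely, given such an $L$, Definition 1.1 produces some finite \'{e}tale Galois cover $Y\to X$ with $k(Y)=L$; the quasi-galois closed condition together with the requirement that $Y$ admit a reduced affine covering with values in $\Omega^{un}$ pins this cover down up to a unique $X$-isomorphism. I would then verify that the correspondence is order-preserving: a dominance $Y'\to Y$ in $X_{qc}[\Omega]$ corresponds precisely to an inclusion $k(Y)\subseteq k(Y')$ of subfields of $\Omega^{un}$.

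Having identified the index sets, I will next verify the termwise isomorphism $Aut(Y/X)\cong Gal(k(Y)/k(X))$ together with its compatibility with the transition maps. For a finite \'{e}tale Galois cover $Y\to X$ of integral schemes, restriction to the generic fiber sends an $X$-automorphism of $Y$ to a $k(X)$-automorphism of $k(Y)$; this map is injective by integrality of $Y$ and surjective by matching orders $|Aut(Y/X)|=\deg(Y/X)=[k(Y):k(X)]$. Since the transition maps on both sides are induced by restriction of automorphisms, the termwise isomorphisms assemble into an isomorphism of inverse limits, which is the stated conclusion.

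The main obstacle, and the place where the construction of $X_{qc}[\Omega]$ is essential, is the well-definedness and uniqueness in the map $L\mapsto Y$. Without the quasi-galois closed hypothesis there could in principle be several non-isomorphic \'{e}tale Galois covers sharing a single function field, and the assignment would fail to be functorial in $L$. The role of the reduced affine coverings with values in $\Omega^{un}$ is precisely to single out a canonical representative, so that the correspondence respects the directed set structure and commutes with the Galois restriction maps; carrying out this verification, together with the cofinality claim on which it rests, is the technical heart of the argument.
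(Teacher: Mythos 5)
Your proposal follows essentially the same route as the paper: both realize $\pi _{1}^{et}(X,s)$ as an inverse limit over $X_{et}[\Omega]$, pass to the cofinal subsystem $X_{qc}[\Omega]$ of quasi-galois closed covers, apply the termwise isomorphism $Aut(Y/X)\cong Gal\left(k(Y)/k(X)\right)$, and identify the resulting limit with $Gal\left(k(X)^{un}/k(X)\right)$. The steps you defer --- the cofinality of $X_{qc}[\Omega]$ in $X_{et}[\Omega]$ and the existence and uniqueness of a quasi-galois closed cover with prescribed function field --- are exactly what the paper supplies via the explicit gluing construction of \S 3 together with Lemmas 2.4, 2.5 and 2.13, so you have correctly located the technical heart of the argument.
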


We will prove the main theorem in \S 4 after the preparations are made in \S\S 2-3.

\section{Preliminaries}

For convenience, let us fix notation and definitions and give (or recall)
preliminary facts.

\subsection{Notation}

Fixed an integral domain $D$. In the paper, we let $Fr(D)$ denote the field of fractions on $D$.

If $D$ be a subring of a field $\Omega $, the field $Fr(D)$ will always
assumed to be contained in $\Omega $.

Let $E$ be an extension of a field $F$ (not necessarily algebraic). $E$ is said to be \textbf{Galois} over $F$ if $F$ is the fixed subfield of the Galois group $Gal(E/F)$.

By an \textbf{integral variety}, we will understand an integral scheme surjectively over $Spec(\mathbb{Z})$ (not necessarily of finite type).

\subsection{Affine Covering with Values}

Fixed a scheme $X$. As usual, an affine covering of the scheme $X$ is a
family $$\mathcal{C}_{X}=\{(U_{\alpha },\phi _{\alpha };A_{\alpha
})\}_{\alpha \in \Delta }$$ such that for each $\alpha \in \Delta $, $\phi
_{\alpha }$ is an isomorphism from an open set $U_{\alpha }$ of $X$ onto the
spectrum $Spec{A_{\alpha }}$ of a commutative ring $A_{\alpha }$. Each $%
(U_{\alpha },\phi _{\alpha };A_{\alpha })\in \mathcal{C}_{X}$ is called a
\textbf{local chart}.

An affine covering $\mathcal{C}_{X}$ of $X$ is said to
be \textbf{reduced} if $U_{\alpha }\neq U_{\beta }$ holds for any $\alpha
\neq \beta $ in $\Delta $.

Let $\mathfrak{Comm}$ be the category of commutative rings with identity.
Fixed a subcategory $\mathfrak{Comm}_{0}$ of $\mathfrak{Comm}$. An affine
covering $\{(U_{\alpha },\phi _{\alpha };A_{\alpha })\}_{\alpha \in \Delta }$
of $X$ is said to be \textbf{with values} in $\mathfrak{Comm}_{0}$ if
 for each $\alpha \in \Delta $ there are $\mathcal{O}_{X}(U_{\alpha})=A_{\alpha}$ and $U_{\alpha}=Spec(A_{\alpha})$, where
 $A_{\alpha }$ is a ring contained in $\mathfrak{Comm}_{0}$.
 
In
particular, let $\Omega $ be a field and let $\mathfrak{Comm}(\Omega )$ be
the category consisting of the subrings of $\Omega $ and their isomorphisms.
An affine covering $\mathcal{C}_{X}$ of $X$ with values in $\mathfrak{Comm}%
(\Omega )$ is said to be \textbf{with values in the field $\Omega $}.

\subsection{Definition of Quasi-Galois Closed}

Let $X$ and $Y$ be integral varieties and let $f:X\rightarrow Y$ be a
surjective morphism. Denote by $Aut\left( X/Y\right) $ the group of
automorphisms of $X$ over $Z$.

By a \textbf{conjugate} $Z$ of $X$ over $Y$, we understand an integral
variety $Z$ that is isomorphic to $X$ over $Y$.

Let $\mathcal{O}_{X}$ and $\mathcal{O}^{\prime}_{X}$ be two structure sheaves on the underlying space of an integral scheme $X$. The two integral schemes $(X,\mathcal{O}_{X})$ and $(X, \mathcal{O}^{\prime}_{X})$ are said to be \textbf{essentially equal} provided that for any open set $U$ in $X$, we have
 $$U \text{ is affine open in }(X,\mathcal{O}_{X}) \Longleftrightarrow \text{ so is }U \text{ in }(X,\mathcal{O}^{\prime}_{X})$$ and in such a case,  $D_{1}=D_{2}$ holds or  there is $Fr(D_{1})=Fr(D_{2})$ such that for any nonzero $x\in Fr(D_{1})$, either $$x\in D_{1}\bigcap D_{2}$$ or $$x\in D_{1}\setminus D_{2} \Longleftrightarrow x^{-1}\in D_{2}\setminus D_{1}$$ holds, where $D_{1}=\mathcal{O}_{X} (U)$ and $D_{2}=\mathcal{O}^{\prime}_{X} (U)$.

 Two schemes $(X,\mathcal{O}_{X})$ and $(Z,\mathcal{O}_{Z})$ are said to be \textbf{essentially equal} if the underlying spaces of $X$ and $Z$ are equal and the schemes $(X,\mathcal{O}_{X})$ and $(X,\mathcal{O}_{Z})$ are essentially equal.
 
\begin{definition}
$X$ is said to be \textbf{quasi-galois closed} over $Y$ by $f$ if there is an algebraically closed field $\Omega$
and a reduced affine covering $\mathcal{C}_{X}$ of $X$ with values in $
\Omega $ such that for any conjugate $Z$ of
$X$ over $Y$ the two conditions are satisfied:
\begin{itemize}
\item $(X,\mathcal{O}_{X})$ and $(Z,\mathcal{O}_{Z})$ are essentially equal if $Z$ has a reduced
affine covering with values in $\Omega$.

\item $\mathcal{C}_{Z}\subseteq \mathcal{C}_{X}$ holds if $\mathcal{C}_{Z}$
is a reduced affine covering of $Z$ with values in $\Omega $.
\end{itemize}
\end{definition}

\begin{remark}
$\left( i\right) $ In fact, here $\Omega $ is  an algebraical closure
of the function field $k\left( X\right) $ (See \cite{An2}).

$\left( ii\right) $ A quasi-galois closed variety $X$ has only one conjugate
(over $Y$) in an algebraic closure $\Omega $ of the function field $k\left( X\right) $. That is, let $Z$ and $Z^{\prime }$ be conjugates of $X.$
Then we must have $Z=Z^{\prime }$ if $Z$ and $Z^{\prime }$ both have reduced
affine coverings with values in $\Omega .$
\end{remark}

\subsection{Existence of Quasi-Galois Closed}

By the lemma below we can take an arithmetic variety as a scheme that has a reduced affine covering in a field.

\begin{lemma}
Let $Y$ be an arithmetic variety and let $\Omega $ be an algebraic closure
of the function field $k\left( Y\right) $. Then there is an arithmetic
variety $Z$ satisfying the conditions:

\begin{itemize}
\item $k\left( Y\right) =k\left( Z\right) \subseteq \Omega ;$

\item $Y\cong Z$ are isomorphic;

\item $Z$ has a reduced affine covering with values in $\Omega .$
\end{itemize}
\end{lemma}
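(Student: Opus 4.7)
The plan is to replace $Y$ by an isomorphic copy $Z$ whose affine charts sit literally inside $\Omega$, using the fact that each affine chart of an integral scheme embeds canonically into the function field via the generic point. First I would choose a finite affine open covering $\{V_{\alpha}\}_{\alpha\in\Delta}$ of $Y$ with $V_{\alpha}=\mathrm{Spec}(B_{\alpha})$ and each $B_{\alpha}$ a finitely generated $\mathbb{Z}$-algebra; such a covering exists because $Y$ is an arithmetic variety. Since $Y$ is integral, the generic point $\eta$ lies in every $V_{\alpha}$ and the canonical map $B_{\alpha}\to (B_{\alpha})_{(0)}=k(Y)$ is injective. Composing with a fixed embedding $k(Y)\hookrightarrow\Omega$ yields injections $\iota_{\alpha}\colon B_{\alpha}\hookrightarrow\Omega$.

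Next I would set $A_{\alpha}:=\iota_{\alpha}(B_{\alpha})\subseteq\Omega$ and glue the schemes $\mathrm{Spec}(A_{\alpha})$ along the same transition data that glues the $\mathrm{Spec}(B_{\alpha})$ into $Y$. On an overlap $V_{\alpha}\cap V_{\beta}$ the transition is obtained by inverting suitable elements, and these localizations carry over compatibly to $A_{\alpha}$ and $A_{\beta}$ because both are realized as subrings of the common field $k(Y)$. The glued scheme $Z$ is then integral, of finite type over $\mathrm{Spec}(\mathbb{Z})$, and equipped with a canonical isomorphism $Y\cong Z$; hence $Z$ is an arithmetic variety and $k(Z)=k(Y)\subseteq\Omega$.

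To finish, the collection $\mathcal{C}_{Z}=\{(U_{\alpha},\phi_{\alpha};A_{\alpha})\}_{\alpha\in\Delta}$, where $U_{\alpha}$ denotes the image of $V_{\alpha}$ under $Y\cong Z$ and $\phi_{\alpha}\colon U_{\alpha}\cong\mathrm{Spec}(A_{\alpha})$ is the induced isomorphism, is an affine covering of $Z$ with values in $\Omega$; discarding duplicates (keeping one chart per underlying open set) makes it reduced. The one subtle point, and where I expect the only real work, is verifying that the gluing cocycle descends verbatim to the $A_{\alpha}$: this amounts to checking that the identifications $(B_{\alpha})_{f}\cong(B_{\beta})_{g}$ defining $Y$ become equalities of subrings of $k(Y)$ once we pass to images in $\Omega$, which follows because every ring in sight embeds into the single field $k(Y)$ and so all identifications take place inside $\Omega$.
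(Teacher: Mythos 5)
Your proposal is correct and follows the same route the paper intends: the paper's own proof is the one-line remark that the claim is ``immediate from definition,'' and what you have written is precisely the spelled-out version of that --- embedding each affine coordinate ring into $k(Y)\subseteq\Omega$ via the generic point and regluing. The only point worth keeping explicit is the one you already flag, namely that the gluing data descends to the subrings $A_{\alpha}\subseteq\Omega$, which holds because all identifications already take place inside the single field $k(Y)$.
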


\begin{proof}
It is immediate from definition for affine covering with values in a given
field.
\end{proof}

There is the lemma below for the existence of quasi-galois closed.

\begin{lemma}
\emph{(See \cite{An3})} Let $Y$ be an arithmetic variety with $K=k\left( Y\right) $ and let $L$ be a finitely generated extension of $K$ such that $L$ is Galois
over $K$.
Then there exists an arithmetic variety $X$ and a surjective morphism $%
f:X\rightarrow Y$ of finite type such that

\begin{itemize}
\item $L=k\left( X\right) $;

\item $f$ is affine;

\item $X$ is a quasi-galois closed over $Y$ by $f$.
\end{itemize}
\end{lemma}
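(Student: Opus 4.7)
The plan is to construct $X$ by taking integral closures of affine pieces of $Y$ in $L$ and gluing them together.

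First I would invoke Lemma 2.5 to replace $Y$ by an isomorphic arithmetic variety equipped with a reduced affine covering $\{(U_\alpha,\phi_\alpha;A_\alpha)\}_{\alpha\in\Delta}$ with values in $\Omega$, so each $A_\alpha$ is a subring of $K=k(Y)$. For every $\alpha$, let $B_\alpha\subset\Omega$ be the integral closure of $A_\alpha$ in $L$; since $A_\alpha$ is a finitely generated $\mathbb{Z}$-algebra and $L/K$ is a finite separable extension, $B_\alpha$ is a finite $A_\alpha$-algebra with $\mathrm{Fr}(B_\alpha)=L$, hence itself a finitely generated $\mathbb{Z}$-algebra. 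Put $V_\alpha=\mathrm{Spec}(B_\alpha)$ and let $f_\alpha:V_\alpha\to U_\alpha$ denote the natural morphism.

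Next I would glue the family $\{V_\alpha\}$. On any overlap $U_\alpha\cap U_\beta$, localizations of $A_\alpha$ and $A_\beta$ agree inside $K$, and since integral closure in $L$ commutes with these localizations and is performed inside the fixed $\Omega$, the rings $B_\alpha$ and $B_\beta$ supply matching gluing data over the overlap. The result is an integral scheme $X$ together with a canonical morphism $f:X\to Y$ which locally reads $A_\alpha\hookrightarrow B_\alpha$; consequently $f$ is affine, surjective, of finite type, and $k(X)=L$. The family $\mathcal{C}_X=\{(V_\alpha,\psi_\alpha;B_\alpha)\}$ is then a reduced affine covering of $X$ with values in $\Omega$.

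To obtain the quasi-galois closed property, let $Z$ be any conjugate of $X$ over $Y$ admitting a reduced affine covering $\mathcal{C}_Z$ with values in $\Omega$. Then $k(Z)=L\subseteq\Omega$, and over each $U_\alpha$ the affine pieces of $Z$ correspond to subrings of $\Omega$ with fraction field $L$ that are integral over $A_\alpha$. Uniqueness of the integral closure of $A_\alpha$ in $L$ (inside the fixed $\Omega$) forces each such ring to be contained in $B_\alpha$, and a careful comparison at the level of local sections, using the definition of essential equality, yields $(X,\mathcal{O}_X)$ and $(Z,\mathcal{O}_Z)$ essentially equal as well as $\mathcal{C}_Z\subseteq\mathcal{C}_X$.

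The main obstacle will be the last step: cleanly matching the charts of an arbitrary conjugate $Z$ with those of $X$ in the precise form demanded by the essential-equality definition (the dichotomy on elements of $\mathrm{Fr}(D_1)=\mathrm{Fr}(D_2)$). This requires exploiting that $L\subseteq\Omega$ is fixed and that integral closure in $\Omega$ is uniquely determined, so that no Galois-conjugate embedding of $L$ in $\Omega$ can produce a genuinely different reduced affine covering over the same $Y$. The remaining work, including gluing, affineness of $f$, and finite generation of $B_\alpha$, is routine once the uniqueness of the integral closure inside $\Omega$ is exploited.
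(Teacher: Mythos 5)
There is a genuine gap, and it lies in the very first reduction you make. The lemma does not assume that $L/K$ is algebraic: in this paper ``Galois'' is defined for arbitrary (not necessarily algebraic) extensions by the fixed-field condition, and $L$ is only required to be a \emph{finitely generated} extension of $K$ that is Galois in this generalized sense (e.g.\ $L=K(t)$ with $Gal(K(t)/K)\cong PGL_2(K)$ is such an extension; note also that Definition 2.9 of conjugation and the $\dim X=\dim Y$ clause in Lemma 2.7 only make sense because the transcendental case is intended). Your construction takes $B_\alpha$ to be the integral closure of $A_\alpha$ in $L$, and you justify finiteness and $\mathrm{Fr}(B_\alpha)=L$ by asserting that $L/K$ is ``a finite separable extension.'' When $L/K$ has positive transcendence degree this fails: the integral closure of $A_\alpha$ in $L$ has fraction field contained in the relative algebraic closure of $K$ in $L$, so the glued scheme $X$ satisfies $k(X)\subsetneq L$ and the first conclusion of the lemma is simply false for your $X$. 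Even granting the algebraic case, your verification of the quasi-galois closed property is only a sketch (``a careful comparison \ldots yields essential equality''), and it does not engage with the paper's actual criteria (Lemmas 2.12--2.13), which are phrased in terms of a \emph{unique maximal} affine patching; your covering indexed by the charts of $Y$ is not maximal.

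For comparison: the paper itself does not prove this lemma in the text --- it is quoted from \cite{An3} --- but the parallel construction it does carry out in \S 3 proceeds differently from yours. There, one fixes a set of generators $\Delta\subseteq L\setminus K$ and, over each chart $B_V$ of $Y$, forms $A_V=B_V[\Delta_V]$ where $\Delta_V$ is the full $Gal(L/K)$-orbit of $\Delta$, then glues the spectra via an equivalence relation on prime ideals inside $L$. Adjoining the Galois orbit of generators works uniformly for transcendental and algebraic generators and directly produces $Gal(L/K)$-stable rings with fraction field $L$, which is what the quasi-galois closed condition needs; integral closure only captures the algebraic part. If you want to salvage your approach, you would need to restrict the lemma to algebraic $L/K$ (which would still suffice for the application in Step 2 of the main theorem's proof, where the extensions $k(X_\alpha^{un})/k(X)$ are finite) and then carry out the essential-equality and maximality checks in full.
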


\subsection{Properties of Quasi-Galois Closed}

Quasi-galois closed schemes have the following properties.

\begin{lemma}
\emph{(See \cite{An2})} Let $X$ and $Y$ be two arithmetic varieties such
that $X$ is quasi-galois closed over $Y$ by a surjective morphism $f$ of
finite type. Then there are the following statements.

\begin{itemize}
\item $f$ is affine.

\item $k\left( X\right) $ is canonically Galois over $k(Y)$.

\item There is a group isomorphism
\begin{equation*}
{Aut}\left( X/Y\right) \cong Gal(k\left( X\right) /k(Y)).
\end{equation*}

\item Let $\dim X=\dim Y$. Then $X$ is a pseudo-galois
cover over $Y$ in the sense of Suslin-Voevodsky.
\end{itemize}
\end{lemma}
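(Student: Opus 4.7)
The plan is to verify the four assertions in sequence by leveraging the reduced affine covering $\mathcal{C}_{X}=\{(U_{\alpha},\phi_{\alpha};A_{\alpha})\}_{\alpha\in\Delta}$ of $X$ with values in the algebraic closure $\Omega$ of $k(X)$ that is guaranteed by the definition of quasi-galois closed. For the first bullet I would show that for every affine open $V\subseteq Y$ the preimage $f^{-1}(V)$ is affine. After shrinking $V$ to $Spec(B)$ with $B\subseteq k(Y)\subseteq k(X)\subseteq\Omega$, each chart $(U_{\alpha},\phi_{\alpha};A_{\alpha})$ meeting $f^{-1}(V)$ provides an inclusion $B\hookrightarrow A_{\alpha}$ inside $\Omega$. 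Reducedness of the covering together with the quasi-galois closed property, which forbids extra reduced affine charts of a conjugate sitting inside $\Omega$, forces $f^{-1}(V)$ to be the spectrum of the compositum of the relevant $A_{\alpha}$ over $B$ in $\Omega$, hence affine.

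For the second bullet, pick $\sigma\in Gal(\Omega/k(Y))$. Applying $\sigma$ coefficient-wise to each $A_{\alpha}\subseteq\Omega$ produces a new reduced affine covering, still with values in $\Omega$, of a conjugate $\sigma(X)$ of $X$ over $Y$. The definition of quasi-galois closed then delivers both essential equality of $\sigma(X)$ with $X$ and the inclusion $\sigma(\mathcal{C}_{X})\subseteq\mathcal{C}_{X}$; in particular $\sigma(k(X))=k(X)$, so $k(X)/k(Y)$ is normal. Since an arithmetic variety surjects onto $Spec(\mathbb{Z})$ with integral generic fiber over $\mathbb{Q}$, the extension lives in characteristic zero and is automatically separable, hence Galois. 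For the third bullet I would define $\rho:Aut(X/Y)\to Gal(k(X)/k(Y))$ by restriction to the stalk at the generic point. Injectivity is standard: a morphism of integral schemes agreeing with the identity on the generic point is the identity. Surjectivity uses the twisting construction of the previous paragraph to glue the $\sigma$-twists of the local charts into a global automorphism restricting to $\sigma$. The fourth bullet is then immediate: affineness plus finite type plus equidimensionality upgrades $f$ to a finite surjection, and together with the Galois function-field extension and the isomorphism $Aut(X/Y)\cong Gal(k(X)/k(Y))$ this is precisely the Suslin--Voevodsky definition of a pseudo-galois cover.

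The main obstacle I anticipate is the surjectivity step in the third bullet. Concretely, given $\sigma\in Gal(k(X)/k(Y))$ one gets an obvious $\sigma$-twisted copy of each $A_{\alpha}$ and a natural bijection of underlying open sets, but one must check that these local twists are mutually compatible on overlaps and glue to a single scheme-theoretic automorphism of $X$. This compatibility rests on both clauses of the definition of quasi-galois closed: the essential-equality condition identifies the twisted structure sheaf with the original on each overlap, while $\mathcal{C}_{\sigma(X)}\subseteq\mathcal{C}_{X}$ rules out the appearance of any new chart that could obstruct gluing. Once that gluing is justified, the first bullet follows from the containment of all the $A_{\alpha}$ in $\Omega$, and the second and fourth bullets become essentially formal consequences.
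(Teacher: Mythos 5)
The paper itself offers no proof of this lemma: it is quoted from the external reference \cite{An2}, so there is no internal argument to compare yours against. Judged on its own terms, your proposal is a reasonable outline of how such a proof should go, but it has genuine gaps precisely at the points that carry the content of the lemma.

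For affineness you claim that $f^{-1}(V)$ ``is forced to be the spectrum of the compositum of the relevant $A_{\alpha}$ over $B$ in $\Omega$.'' Nothing in the definition of quasi-galois closed yields this: $f^{-1}(V)$ is a union of the charts $U_{\alpha}=Spec(A_{\alpha})$, a union of affine opens of an integral scheme is in general not affine, and $Spec$ of the compositum of the $A_{\alpha}$ inside $\Omega$ is a different scheme from that union, so the step as stated does not go through. For the third bullet you correctly identify surjectivity of $Aut(X/Y)\rightarrow Gal(k(X)/k(Y))$ as the crux, but the gluing of the $\sigma$-twisted charts is exactly what needs proving and is left unverified; moreover, your treatment of the second bullet via $Gal(\Omega/k(Y))$ and ``normal plus separable in characteristic zero'' tacitly assumes $k(X)/k(Y)$ is algebraic, which is not among the hypotheses (only the fourth bullet assumes $\dim X=\dim Y$), and the paper's notion of Galois for possibly transcendental extensions --- the fixed field of $Gal(k(X)/k(Y))$ equals $k(Y)$ --- requires precisely the supply of automorphisms whose construction you have deferred, so the second and third bullets are circularly entangled in your ordering. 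Finally, ``affine plus finite type plus equidimensional upgrades $f$ to a finite surjection'' is false in general (a surjective quasi-finite affine morphism of integral schemes need not be finite), so the reduction to the Suslin--Voevodsky definition of pseudo-galois cover, which demands finiteness of $f$, is not established. To repair the argument you would need to extract substantially more from the quasi-galois closed hypothesis than the proposal currently does, or simply defer, as the paper does, to the cited source.
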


\begin{lemma}
\emph{(See \cite{An5})} Let $f:X\rightarrow Y$ be a surjective morphism of integral varieties.
Suppose that $X$ is quasi-galois closed over $Y$ by $f$ and $k\left( X\right) $
is canonically Galois over $k\left( Y\right) .$ Then $f$ is
affine and there is a group isomorphism
\begin{equation*}
Aut\left( X/Y\right) \cong Gal\left( k\left( X\right) /k\left( Y\right)
\right) .
\end{equation*}
\end{lemma}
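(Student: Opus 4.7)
\medskip

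\noindent\textbf{Proof proposal.} The plan is to construct a natural restriction map
$$\rho : Aut(X/Y) \longrightarrow Gal(k(X)/k(Y))$$
and to prove that $\rho$ is a bijection; the affine-ness of $f$ will then fall out of the same construction. To define $\rho$, observe that integrality of $X$ forces each $\varphi\in Aut(X/Y)$ to fix the generic point $\xi$, so $\varphi$ induces a ring automorphism of $\mathcal{O}_{X,\xi}=k(X)$ which restricts to the identity on $k(Y)$ because $\varphi$ is a $Y$-morphism. Injectivity of $\rho$ is routine: two $Y$-automorphisms of an integral scheme that agree on $k(X)$ necessarily coincide on every affine chart, and therefore globally.

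The main work is surjectivity. Fix $\sigma\in Gal(k(X)/k(Y))$ and let
$$\mathcal{C}_X=\{(U_\alpha,\phi_\alpha;A_\alpha)\}_{\alpha\in\Delta}$$
be the reduced affine covering of $X$ with values in the algebraically closed field $\Omega$ provided by the quasi-galois closed hypothesis. By Remark 2.4(i), $\Omega$ may be taken as an algebraic closure of $k(X)$, so $\sigma$ extends to an automorphism of $\Omega$ fixing $k(Y)$ pointwise. Transport $X$ along $\sigma$: the charts $(U_\alpha,\phi_\alpha^\sigma;\sigma(A_\alpha))$ glue to an integral scheme $X^\sigma$ over $Y$ equipped with a reduced affine covering valued in $\Omega$, and the ring isomorphisms $\sigma:A_\alpha\to \sigma(A_\alpha)$ assemble into a $Y$-scheme isomorphism $\widetilde{\sigma}:X\to X^\sigma$. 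Since $\sigma$ acts trivially on $k(Y)$, the scheme $X^\sigma$ is a conjugate of $X$ over $Y$, so the quasi-galois closed property forces $X^\sigma$ and $X$ to be essentially equal and produces an identification $\iota:X^\sigma\to X$ over $Y$. Then $\iota\circ\widetilde{\sigma}\in Aut(X/Y)$ is the desired pre-image of $\sigma$ under $\rho$.

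For the affine-ness of $f$, fix an affine open $V=Spec(B)\subseteq Y$ and consider those charts $(U_\alpha,\phi_\alpha;A_\alpha)$ of $\mathcal{C}_X$ with $A_\alpha\supseteq B$. The $Gal(k(X)/k(Y))$-action just constructed permutes these charts transitively over $V$, and the essential-equality clause of the quasi-galois closed definition collapses them into a single affine open whose ring is the subring of $\Omega$ generated by all the $A_\alpha$, giving $f^{-1}(V)=Spec(\bigcup_{\alpha}A_\alpha)$. The main obstacle is surjectivity of $\rho$: one has to verify that the transport of gluing data along $\sigma$ really assembles into a well-defined scheme $X^\sigma$, and more importantly that the transported charts form a reduced affine covering valued in $\Omega$, so that the quasi-galois hypothesis applies and recovers $X$ up to essential equality. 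Once this transport is justified, the remaining bookkeeping is essentially automatic from the definition of essential equality.
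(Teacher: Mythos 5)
First, a caveat: the paper does not actually prove this lemma --- it is imported wholesale from the external reference \cite{An5}, so there is no internal argument to compare yours against. Judged on its own terms, your strategy for the group isomorphism is the natural one in this framework: restrict an automorphism to the generic point to define $\rho:Aut(X/Y)\rightarrow Gal(k(X)/k(Y))$, then get surjectivity by transporting the distinguished covering $\mathcal{C}_{X}$ along an extension of $\sigma$ to $\Omega$ and invoking the fact that a quasi-galois closed scheme admits only one conjugate with a reduced affine covering valued in $\Omega$ (Remark 2.2(ii)); this parallels how the paper itself handles conjugates in the proof of Lemma 2.13. Two steps still need care. Injectivity of $\rho$ is not purely formal for possibly non-separated integral schemes; it should be argued from the fact that each chart of $\mathcal{C}_{X}$ is literally $Spec$ of a subring of $\Omega$, so that an automorphism acting trivially on $k(X)$ fixes each chart. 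And ``essentially equal'' as defined in \S 2.3 explicitly permits $\mathcal{O}_{X}(U)\neq\mathcal{O}_{X^{\sigma}}(U)$, so it does not by itself hand you an isomorphism $\iota$ over $Y$; you need the second clause of the definition of quasi-galois closed ($\mathcal{C}_{X^{\sigma}}\subseteq\mathcal{C}_{X}$) and an argument in the style of Lemma 2.13 to upgrade essential equality to an actual identification of schemes over $Y$.

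The genuine gap is the affineness of $f$. Your argument --- that the Galois action permutes the charts lying over an affine open $V\subseteq Y$ transitively, and that the essential-equality clause ``collapses'' them into a single affine open with ring $\bigcup_{\alpha}A_{\alpha}$ --- does not work as stated. Nothing in the construction makes the action transitive on the set of charts contained in $f^{-1}(V)$: that set contains, for instance, all distinguished opens of any given chart, and no finite or profinite group permutes those transitively. Moreover $\bigcup_{\alpha}A_{\alpha}$ is a ring only if the union is directed, which you have not shown, and even a union of affine opens that is permuted transitively by a group need not be affine. Affineness of a quasi-galois closed cover is a substantive assertion (it also appears, again only by citation to \cite{An2}, in Lemma 2.5); to prove it rather than cite it you would need to actually exhibit $f^{-1}(V)$ as a spectrum, e.g.\ by producing a single chart $(U^{\prime},\phi^{\prime};A^{\prime})\in\mathcal{C}_{X}$ with $U^{\prime}=f^{-1}(V)$, which is what the condition in Definition 2.10 is designed to supply. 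As written, this part of your proof is an assertion, not an argument.
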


\begin{remark}
Let $X$ and $Y$ be integral varieties such that $X$ is quasi-galois closed
over $Y$ by a surjective morphism $f$. Then there is a natural
isomorphism
\begin{equation*}
\mathcal{O}_{Y}\cong f _{\ast }(\mathcal{O}_{X})^{G}
\end{equation*}
between sheaves,
where $(\mathcal{O}_{X})^{G}(U)$ denotes the invariant
subring of $\mathcal{O}_{X}(U)$ under the natural action of $G\triangleq {Aut}\left(
X/Y\right) $ for any open subset $U$ of $X$.
\end{remark}

\subsection{Criteria of Quasi-Galois Closed}

Let $K$ be an extension of a field $k$ (not necessarily algebraic).

\begin{definition}
(See \cite{An2})
 $K$ is said to be \textbf{quasi-galois} over $k$ if every
irreducible polynomial $f(X)\in F[X]$ that has a root in $K$ factors
completely in $K\left[ X\right] $ into linear factors for any subfield $F$ with $k\subseteq F\subseteq K$.
\end{definition}

Let $D\subseteq D_{1}\cap D_{2}$ be three integral domains. $D_{1}$
is said to be \textbf{quasi-galois} over $D$ if $Fr\left(
D_{1}\right) $ is quasi-galois over $Fr\left( D\right) $.

\begin{definition}
$D_{1}$ is said to be a \textbf{conjugation} of $D_{2}$ over $D$ if
there is an $F-$isomorphism $\tau:Fr(D_{1})\rightarrow
Fr(D_{2})$ such that
$
\tau(D_{1})=D_{2}$,
where $F\triangleq k(\Delta)$, $k\triangleq Fr(D)$, $\Delta$ is a transcendental basis of the field $Fr(D_{1})$ over $k$, and $F$ is contained in $Fr(D_{1})\cap Fr(D_{2})$.
\end{definition}

Now let $X$ and $Y$ be two integral varieties and let $f:X\rightarrow Y$ be a
surjective morphism. Put $\Omega =k(X)^{al}$.

\begin{definition}
A reduced affine covering $\mathcal{C}_{X}$ of $X$ with values in $\Omega $
is said to be \textbf{quasi-galois closed} over $Y$ by $f$ if the
below condition is satisfied:

There exists a local chart $(U_{\alpha }^{\prime },\phi _{\alpha }^{\prime
};A_{\alpha }^{\prime })\in \mathcal{C}_{X}$ such that $U_{\alpha }^{\prime
}\subseteq \varphi^{-1}(V_{\alpha})$ for any $(U_{\alpha },\phi _{\alpha
};A_{\alpha })\in \mathcal{C}_{X}$, for any affine open set $V_{\alpha}$ in $%
Y$ with $U_{\alpha }\subseteq f^{-1}(V_{\alpha})$, and for any
conjugate $A_{\alpha }^{\prime }$ of $A_{\alpha }$ over $B_{\alpha}$, where $%
B_{\alpha}$ is the canonical image of $\mathcal{O}_{ Y}(V_{\alpha})$ in the
function field $k(X)$ via $f$.
\end{definition}

\begin{definition}
(See \cite{An3})
An affine covering $\{(U_{\alpha },\phi _{\alpha };A_{\alpha })\}_{\alpha
\in \Delta }$ of $X$ is said to be an \textbf{affine
patching} of $X$ if the map $\phi _{\alpha }$ is the
identity map on $U_{\alpha }=SpecA_{\alpha }$ for each $\alpha \in \Delta .$
\end{definition}

Evidently, an affine patching is reduced.

\begin{lemma}
\emph{(See \cite{An3})} Let $X$ and $Y$ be arithmetic varieties and let $f$ be of finite type. Suppose $k(Y) \subseteq \Omega$. Then $X$ is quasi-galois closed over $Y$ if there is a unique
maximal affine patching $\mathcal{C}_{X}$ of $X$ with values in $\Omega $
such that

\begin{itemize}
\item either $\mathcal{C}_{X}$ is quasi-galois closed over $Y$ by $f$,

\item or $A_{\alpha }$ has only one conjugate over $B_{\alpha }$ for any $%
(U_{\alpha },\phi _{\alpha };A_{\alpha })\in \mathcal{C}_{X}$ and for any
affine open set $V_{\alpha }$ in $Y$ with $U_{\alpha }\subseteq f
^{-1}(V_{\alpha })$, where $B_{\alpha }$ is the canonical image of $\mathcal{%
O}_{Y}(V_{\alpha })$ in the function field $k(Y)$.
\end{itemize}
\end{lemma}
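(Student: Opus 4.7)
The plan is to verify the two conditions of Definition 2.1 taking $\mathcal{C}_X$ itself as the reduced affine covering required there. So I would fix an arbitrary conjugate $Z$ of $X$ over $Y$ that admits a reduced affine covering $\mathcal{C}_Z$ with values in $\Omega$, and aim to show (i) $\mathcal{C}_Z \subseteq \mathcal{C}_X$, and (ii) $(X,\mathcal{O}_X)$ and $(Z,\mathcal{O}_Z)$ are essentially equal.

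The central translation step is to pass from the geometric conjugacy $Z \cong X$ over $Y$ to an algebraic conjugation of rings inside $\Omega$. For each local chart $(U,\phi;A) \in \mathcal{C}_Z$ lying over an affine open $V \subseteq Y$, the identifications $k(Z)\cong k(X)\subseteq \Omega$ and $A = \mathcal{O}_Z(U)$ realize $A$ as a conjugation (in the sense of Definition 2.8) of the corresponding $A_\alpha$ over the common image $B_\alpha$ of $\mathcal{O}_Y(V)$ in $k(X)$. This is where the hypothesis $k(Y)\subseteq \Omega$ is used, together with the fact that $\mathcal{C}_X$ is an affine patching, so that $\phi_\alpha = \mathrm{id}$ and each $A_\alpha$ determines $U_\alpha = \mathrm{Spec}\, A_\alpha$.

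Granting this, condition (i) splits into two cases. Under hypothesis (b), each $A_\alpha$ has exactly one conjugate over $B_\alpha$, so $A = A_\alpha$ as subrings of $\Omega$, and consequently $(U,\phi;A) = (U_\alpha, \phi_\alpha; A_\alpha) \in \mathcal{C}_X$. Under hypothesis (a), the quasi-galois closed property of $\mathcal{C}_X$ over $Y$ directly furnishes a chart $(U_\alpha', \phi_\alpha'; A_\alpha') \in \mathcal{C}_X$ with $A_\alpha' = A$, and by the patching property the chart $(U,\phi;A)$ from $\mathcal{C}_Z$ must coincide with it. Either way, $\mathcal{C}_Z \subseteq \mathcal{C}_X$.

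For condition (ii), I would combine the containment above with the maximality of $\mathcal{C}_X$ to identify the affine opens of $Z$ with affine opens of $X$ already present in $\mathcal{C}_X$; the two structure sheaves then agree on these common affine opens, and this is exactly what essential equality of $(X,\mathcal{O}_X)$ and $(Z,\mathcal{O}_Z)$ demands at the affine level. The main obstacle, and the step that merits the most care, is the translation from geometric conjugacy to algebraic conjugation described above: one has to check that the isomorphism $Z \cong X$ over $Y$, once both sides are placed inside $\Omega$ via compatible function-field embeddings, actually restricts to an $F$-isomorphism of the relevant subrings in the sense of Definition 2.8, and that the resulting identification yields an honest equality of subrings of $\Omega$ rather than merely the weaker inversion-matching allowed in the definition of essential equality.
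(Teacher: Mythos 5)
Your plan is essentially the argument the paper itself gives (for the companion lemma that immediately follows, since the paper defers this exact statement to \cite{An3} and proves in-house only the first-alternative variant): fix a conjugate $Z$ of $X$ over $Y$ admitting a reduced affine covering with values in $\Omega$, translate the isomorphism over $Y$ into ring conjugations of the charts over the images $B_{\alpha}$ of $\mathcal{O}_{Y}(V_{\alpha})$ in the function field, and use the quasi-galois closedness of $\mathcal{C}_{X}$ (or, in your second case, the uniqueness of conjugates) to force those charts back into $\mathcal{C}_{X}$. The only step you leave implicit that the paper makes explicit is the passage from $\mathcal{C}_{Z}\subseteq \mathcal{C}_{X}$ to equality of the underlying topological spaces (the paper observes that $Z$ is closed and open in the irreducible space $X$, hence $Z=X$), after which the restriction maps and stalks are identified and essential equality of $(X,\mathcal{O}_{X})$ and $(Z,\mathcal{O}_{Z})$ follows.
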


\begin{lemma}
Let $f :X\rightarrow Y$ be a surjective morphism of integral varieties.
Suppose $k(Y) \subseteq \Omega$. Then $X$ is
quasi-galois closed over $Y$ by $f$ if  there is a unique maximal affine
patching $\mathcal{C}_{X}$ of $X$ with values in $\Omega $ such that $%
\mathcal{C}_{X}$ is quasi-galois closed over $Y$.
\end{lemma}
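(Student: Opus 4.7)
The plan is to deduce the variety-level definition of quasi-galois closed in \S 2.3 from the affine-patching-level hypothesis by transferring the hypothesis to each conjugate $Z$ of $X$ chart by chart. Let $\mathcal{C}_X$ denote the given unique maximal affine patching, and fix an arbitrary conjugate $Z$ of $X$ over $Y$ that admits a reduced affine covering $\mathcal{C}_Z$ with values in $\Omega$; the two things to produce are (i) the inclusion $\mathcal{C}_Z\subseteq\mathcal{C}_X$ and (ii) essential equality of $(X,\mathcal{O}_X)$ and $(Z,\mathcal{O}_Z)$.

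First I would fix a $Y$-isomorphism $\sigma:X\to Z$, and write $g:Z\to Y$ for the structural morphism. For any chart $(W,\psi;C)\in\mathcal{C}_Z$, choose an affine open $V\subseteq Y$ with $W\subseteq g^{-1}(V)$, and set $U:=\sigma^{-1}(W)\subseteq f^{-1}(V)$. Since $\sigma$ is an isomorphism over $Y$, the subring $C\subseteq\Omega$ is precisely a conjugate of $A:=\mathcal{O}_X(U)$ over the canonical image $B$ of $\mathcal{O}_Y(V)$ in $k(X)=k(Z)$, in the sense of the conjugation definition preceding the lemma.

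Next I would invoke the hypothesis that $\mathcal{C}_X$ is quasi-galois closed over $Y$ by $f$: starting from a chart $(U_\alpha,\phi_\alpha;A_\alpha)\in\mathcal{C}_X$ containing $U$ (which exists by maximality of the patching), the hypothesis supplies a chart $(U'_\alpha,\phi'_\alpha;A'_\alpha)\in\mathcal{C}_X$ whose ring $A'_\alpha$ coincides with the conjugate subring $C\subseteq\Omega$. Because every member of an affine patching satisfies $\phi'_\alpha=\mathrm{id}$ and $U'_\alpha=\mathrm{Spec}\,A'_\alpha$, this forces $(W,\psi;C)\in\mathcal{C}_X$, whence $\mathcal{C}_Z\subseteq\mathcal{C}_X$.

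Finally, essential equality of $(X,\mathcal{O}_X)$ and $(Z,\mathcal{O}_Z)$ follows by identifying the underlying topological spaces through $\sigma$ and comparing the rings of sections on common affine opens: on each such $U$, the two rings $D_1=\mathcal{O}_X(U)$ and $D_2=\mathcal{O}_Z(U)$ share the fraction field $k(X)=k(Z)$ and are conjugate subrings of $\Omega$ over $B$, so they satisfy the valuation-type alternative built into the definition of essentially equal. The main obstacle will be this last step, verifying the precise biconditional $x\in D_1\setminus D_2\Longleftrightarrow x^{-1}\in D_2\setminus D_1$ from the $B$-conjugation of $D_1$ and $D_2$ inside a common fraction field; the rest of the argument is a careful chart-level translation between the two formulations of the quasi-galois closed condition and should go through verbatim without the finite-type or arithmetic hypotheses used in the preceding lemma.
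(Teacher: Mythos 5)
Your chart-transfer strategy is essentially the paper's own argument run in the opposite direction: the paper pushes each chart $(U,\phi_U;A_U)\in\mathcal{C}_X$ forward to $V=\sigma(U)\subseteq Z$, observes that $B_V=\mathcal{O}_Z(V)$ is a conjugation of $A_U$, and uses the hypothesis on $\mathcal{C}_X$ to find a chart of $\mathcal{C}_X$ with ring $B_V$, concluding $V\subseteq X$ and hence $Z\subseteq X$; you pull charts of $\mathcal{C}_Z$ back instead. Up to that point the two arguments are equivalent and your step producing $\mathcal{C}_Z\subseteq\mathcal{C}_X$ is sound.

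The genuine gap is in your final step, and it is exactly the one you flag yourself. You propose to verify essential equality by showing that two $B$-conjugate subrings $D_1,D_2\subseteq\Omega$ with a common fraction field satisfy the dichotomy $x\in D_1\setminus D_2\Longleftrightarrow x^{-1}\in D_2\setminus D_1$. This is false in general: take $D_1=k[t]$ and $D_2=k[1/t]$, conjugate over $k$ via $t\mapsto 1/t$; then $x=t+1/t$ lies in neither ring and $x^{-1}=t/(t^2+1)$ lies in neither ring, so the biconditional fails. So that route cannot be completed as stated. It is also unnecessary, and this is where the paper diverges from you: once every chart of $Z$ is literally a chart of $\mathcal{C}_X$ (same underlying set $\mathrm{Spec}(C)$, same ring $C\subseteq\Omega$), one gets $Z\subseteq X$ as sets, hence $X=Z$ as topological spaces because $Z$ is open and closed in the irreducible (hence connected) space $X$; then $\mathcal{C}_X$ is an affine patching of $Z$ as well, so $\mathcal{O}_X(U)=\mathcal{O}_Z(U)$ with identical restriction maps on a basis of affine opens, forcing $\mathcal{O}_{X,w}=\mathcal{O}_{Z,w}$ at every point and $(X,\mathcal{O}_X)=(Z,\mathcal{O}_Z)$ outright. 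In other words the conclusion lands in the first alternative ($D_1=D_2$) of the definition of essentially equal, and the valuation-type alternative never needs to be invoked. A secondary point: the definition of essentially equal requires the underlying spaces to be \emph{equal}, not merely identified via $\sigma$, so the topological identification must be established as above rather than assumed; your phrase ``identifying the underlying topological spaces through $\sigma$'' glosses over the step that actually does the work.
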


\begin{proof}
Assume that there is a unique maximal affine
patching $\mathcal{C}_{X}$ of $X$ with values in $\Omega $ such that $\mathcal{C}_{X}$ is quasi-galois closed over $Y$.
Let $Z$ be a conjugate of $X$ over $Y$ such that $Z$ has a reduced affine covering $\mathcal{C}_{Z}$ with values in $\Omega$.
It suffices to prove that the two schemes $X$ and $Z$ are equal.

In fact, let $\sigma: X \rightarrow Z$ be an isomorphism over $Y$. Fixed any local chart $(U,\phi_{U};A_{U})\in \mathcal{C}_{X}$. We have $\mathcal{O}_{X}(U)=A_{U}$ and $U=Spec(A_{U})$.

Put $V=\sigma (U)$ and $B_{V}=\mathcal{O}_{Z}(V)$. As $A_{U}=\sigma^{\sharp}(B_{V})$ holds, it is seen that $B_{V}$ is a conjugation of $A_{U}$. Then we have a local chart $(U^{\prime},\phi_{U^{\prime}};A_{U^{\prime}})\in \mathcal{C}_{X}$ such that $A_{U^{\prime}}=B_{V}$ since $\mathcal{C}_{X}$ is quasi-galois closed over $Y$. This proves $V\subseteq X$ and then $Z \subseteq X$.

 We must have $X=Z$ as topological spaces since via $\sigma$ it is seen that $Z$ is closed and open in $X$. It follows that $\mathcal{C}_{X}$ is also an affine
patching of $Z$.
Then $\mathcal{O}_{X}(U)=\mathcal{O}_{Z}(U)$ holds for any affine open set $U$ in $X$. Furthermore, let $U_{1}\supseteq U_{2}$ be two affine open sets in $X$. The maps $$r_{X}^{U_{1},U_{2}},r_{Z}^{U_{1},U_{2}}:\mathcal{O}_{X}(U_{1})\rightarrow \mathcal{O}_{X}(U_{2})$$ are equal, where $r_{X}^{U_{1},U_{2}}$ and $r_{Z}^{U_{1},U_{2}}$ are the restrictions of $\mathcal{O}_{X}$ and $\mathcal{O}_{X}$, respectively.

Hence, the stalks $\mathcal{O}_{X,w}$ and $\mathcal{O}_{Z,w}$ coincide with each other at every point $w$ in the underlying space $X$.

It is seen that $\mathcal{O}_{X}$ can be identified with $\mathcal{O}_{Z}$, that is, $(X,\mathcal{O}_{X})=(Z,\mathcal{O}_{Z})$. This proves that  $X$ is
quasi-galois closed over $Y$ by $f$.
\end{proof}

\section{Construction for the Model}

In this section we will construct an integral scheme $X_{\infty }^{un}$ over
$Spec\left( \mathbb{Z}\right) $ such that $k\left( X_{\infty }^{un}\right)
=k\left( X\right) ^{un}.$

Let $Y$ be an arithmetic variety. For brevity, put $K=k\left( Y\right) ,$ $%
\Omega =k\left( Y\right) ^{al},$ and $L=K^{un}\subseteq \Omega .$

By \emph{Lemma 2.1}, without loss of generality, the scheme $Y$ is assumed
to have a reduced affine covering $\mathcal{C}_{Y}$ with values in $\Omega .$
We choose $\mathcal{C}_{Y}$ to be maximal (in the sense of set inclusion).

In the following we will proceed in several steps to construct a scheme $X$
and a surjective morphism $f:X\rightarrow Y$ such that $k\left( X\right) =L$.

\emph{\textbf{Step 1.}} Fixed a set $\Delta $ of generators of the field $L$
over $K$ with $\Delta \subseteq L\setminus K.$ Put $G=Gal\left( L/K\right) .$

\emph{\textbf{Step 2.}} Take any local chart $\left( V,\psi
_{V},B_{V}\right) \in \mathcal{C}_{Y}.$ As $V$ is affine open in $Y,$ we
have
$
Fr\left( B_{V}\right) =K$ and $\mathcal{O}_{Y}\left( V\right)
=B_{V}\subseteq \Omega .
$

Define $A_{V}=B_{V}\left[ \Delta _{V}\right] ,$ i.e., the subring of $L$
generated over $B_{V}$ by the set
$
\Delta _{V}=\{\sigma \left( x\right) \in L:\sigma \in G,x\in \Delta \}.
$
Let $i_{V}:B_{V}\rightarrow A_{V}$ be the inclusion.

\emph{\textbf{Step 3.}} Define
$$
\Sigma =\coprod\limits_{\left( V,\psi _{V},B_{V}\right) \in \mathcal{C}%
_{Y}}Spec\left( A_{V}\right)
$$
to be the disjoint union. Let $\pi _{Y}:\Sigma \rightarrow Y$ be the
projection induced by the inclusions $i_{V}$.

Then $\Sigma $ is a topological space, where the topology $\tau _{\Sigma }$
on $\Sigma $ is naturally determined by the Zariski topologies on all $%
Spec\left( A_{V}\right) .$

\emph{\textbf{Step 4.}} Given an equivalence relation $R_{\Sigma }$ in $%
\Sigma $ in such a manner:

For any $x_{1},x_{2}\in \Sigma $, we say $x_{1}\sim x_{2}$ if and only if $%
j_{x_{1}}=j_{x_{2}}$ holds in $L$. Here, $j_{x}$ denotes the corresponding
prime ideal of $A_{V}$ to a point $x\in Spec\left( A_{V}\right) $ (see \cite%
{EGA}).

Let $X=\Sigma /\sim $ and let $\pi _{X}:\Sigma \rightarrow X$ be the
projection.

It is seen that $X$ is a topological space as a quotient of $\Sigma .$

\emph{\textbf{Step 5.}} Given a map $f:X\rightarrow Y$ of spaces by
$
\pi _{X}\left( z\right) \longmapsto \pi _{Y}\left( z\right)
$
for each $z\in \Sigma $.

\emph{\textbf{Step 6.}} Define
\begin{equation*}
\mathcal{C}_{X}=\{\left( U_{V},\varphi _{V},A_{V}\right) \}_{\left( V,\psi
_{V},B_{V}\right) \in \mathcal{C}_{Y}}
\end{equation*}%
where $U_{V}=\pi _{Y}^{-1}\left( V\right) $ and $\varphi
_{V}:U_{V}\rightarrow Spec(A_{V})$ is the identity map for each $\left(
V,\psi _{V},B_{V}\right) \in \mathcal{C}_{Y}$.

Define $X_{\infty }^{un}$ to be the scheme $X$ obtained by gluing the affine
schemes $Spec\left( A_{V}\right) $ for all $\left( U_{V},\varphi
_{V},A_{V}\right) \in \mathcal{C}_{X}$ with respect to the equivalence
relation $R_{\Sigma }$ (see \cite{EGA,Hrtsh}).

Then $X_{\infty }^{un}$ is the desired scheme and $f_{\infty }=f:X_{\infty
}^{un}\rightarrow Y$ is the desired morphism of schemes. This completes the
construction.

\begin{remark}
$\mathcal{C}_{X}$ is a reduced affine covering of the scheme $X$ with values
in $\Omega .$ In particular, $\mathcal{C}_{X}$ is maximal (by set inclusion).
\end{remark}

\section{Proof of the Main Theorem}

Now we can give the proof of the Main Theorem of the paper.

\begin{proof}
\textbf{(Proof of Theorem 1.3)} Fixed an algebraic closure $\Omega $\ of the
function field $k\left( X\right) .$ Without loss of generality, assume that $%
X$ has a reduced a reduced affine covering $\mathcal{C}_{X}$ with values in $%
\Omega $ by \emph{Lemma 2.3}.

Let $\Delta \subseteq k(X)^{un}\setminus
k\left( X\right) $ be a set of generators of the field $k(X)^{un}$ over $%
k(X) $. Define $I=\{\text{finite subsets of }\Delta \}.$ Put $G=Gal\left( k(X)^{un}/k\left( X\right)
\right) .$

In the following we will proceed in several steps to demonstrate the theorem.

\emph{Step 1.} By the construction for $\Delta $ and $G$ in \S 3, we obtain
an integral scheme $X_{\infty }^{un}$ over $Spec\left( \mathbb{Z}\right) $
and a surjective morphism $f_{\infty }:X_{\infty }^{un}\rightarrow X$
satisfying the properties:

\begin{itemize}
\item $k\left( X_{\infty }^{un}\right) =k\left( X\right) ^{un}$;

\item $f_{\infty }$ is affine;

\item $k\left( X_{\infty }^{un}\right) $ is Galois over $k\left( X\right) ;$

\item $X_{\infty }^{un}/X$ is quasi-galois closed by $f_{\infty }$.
\end{itemize}

In deed, by \emph{Steps 2} and \emph{4} below it is seen that $k\left( X_{\infty }^{un}\right) $ is Galois over $k\left( X\right) $; then by \emph{Lemma 2.13} it is seen that $X_{\infty }^{un}/X$ is quasi-galois closed by $f_{\infty }$.

\emph{Step 2.} Fixed any ${\alpha }$ in $I$. Repeat the construction for $%
\alpha $ and $G$ in \S 3; then we obtain an arithmetic variety $X_{\alpha
}^{un}$ and a surjective morphism $f_{\alpha }:X_{\alpha }^{un}\rightarrow X$
satisfying the properties:

\begin{itemize}
\item $k\left( X_{\alpha }^{un}\right) \subseteq k\left( X\right) ^{un}$;

\item $f_{\alpha }$ is affine;

\item $k\left( X_{\alpha }^{un}\right) $ is Galois over $k\left( X\right) ;$

\item $X_{\alpha }^{un}/X$ is quasi-galois closed by $f_{\alpha }$.
\end{itemize}

In deed, it is immediate from \emph{Lemmas 2.5} and \emph{2.12}.

\emph{Step 3.} Fixed any ${\alpha },{\beta }$ in $I.$

Assume $\alpha \subseteq \beta $. We have arithmetic varieties $X_{{\alpha }%
}^{un}$ and $X_{{\beta }}^{un}$ and a surjective morphism $$f_{\alpha
}^{\beta }:X_{\beta }^{un}\rightarrow X_{\alpha }^{un}$$ of finite type. Here
$f_{\alpha }^{\beta }$ is obtained in an manner similar to $f_{\infty }$ as
we have done in \S 3.

Then we have $$f_{\beta }=f_{\alpha }\circ f_{\alpha }^{\beta }.$$ It is seen that $X_{%
{\alpha }}^{un}/X$, $X_{{\beta }}^{un}/X$, and $X_{{\beta }}^{un}/X_{{\alpha
}}^{un}$ are all quasi-galois closed.

In particular, there is a $\gamma $ in $I$ such that $\gamma \supseteq
\alpha $ and $\gamma \supseteq \beta .$ It is seen that $X_{{\gamma }%
}^{un}/X_{{\beta }}^{un}$ and $X_{{\gamma }}^{un}/X_{{\alpha }}^{un}$ are
both quasi-galois closed.

\emph{Step 4.} Take any $\alpha ,\beta $ in $I$. We say $\alpha \leq \beta $
if and only if $\alpha \subseteq \beta .$ It is seen that $I$ is a partially
ordered set.

 Then $\{k\left( X_{{\alpha }}^{un}\right) ;i_{\alpha }^{\beta
}\}_{\alpha \in I}$ is a direct system of groups, where each $$i_{\alpha
}^{\beta }:k\left( X_{{\alpha }}^{un}\right) \rightarrow k\left( X_{{\beta }%
}^{un}\right) $$ is a homomorphism of fields determined by the morphism $%
f_{\alpha }^{\beta }$.

It is clear that $$k\left( X_{\infty }^{un}\right) =\lim_{\rightarrow
_{\alpha \in I}}k\left( X_{{\alpha }}^{un}\right) $$ holds as fields.
For the Galois groups, we have
$$Gal\left( k\left( X_{\infty }^{un}\right) /k\left( X\right) \right) \cong
\lim_{\leftarrow _{\alpha \in I}}Gal\left( k\left( X_{\alpha }^{un}\right)
/k\left( X\right) \right) .$$

\emph{Step 5.} Put $$X_{qc}\left[ \Omega \right] =\{X_{\alpha }:\alpha \in
I\}.$$

Then $X_{qc}\left[ \Omega \right] $ is a directed set, where for any $X_{\alpha },X_{\beta }\in X_{qc}\left[ \Omega \right] ,$ we say $X_{\alpha
}\leq X_{\beta }$ if and only if $X_{{\beta }}^{un}$ is quasi-galois closed
over $X_{{\alpha }}^{un}$.

Fixed a geometric point $s$ of $X$ over $\Omega .$ Put
\begin{equation*}
\begin{array}{l}
X_{et}\left[ \Omega %
\right] \\

=\{Z\text{ is a finite \'{e}tale Galois cover of } X \text{ with a geometric point
over }s\}.
\end{array}
\end{equation*}

 Then $X_{et}\left[ \Omega \right] $ is a directed set, where for
any $X_{1},X_{2}\in X_{et}\left[ \Omega \right] ,$ we say $X_{1}\leq X_{2}$
if and only if $X_{2}$ is a finite \'{e}tale Galois cover over $X_{1}.$

Let $X_{\alpha },X_{\beta }\in X_{qc}\left[ \Omega \right] .$ Suppose that $%
X_{\beta }/X_{\alpha }$ is quasi-galois closed. Then $%
X_{\beta }$ must be a finite \'{e}tale Galois cover over $X_{\alpha }.$
Hence,  $X_{qc}\left[ \Omega \right] $ is a directed subset of $X_{et}\left[
\Omega \right] .$

\emph{Step 6.} Take any $Z\in X_{et}\left[ \Omega \right] .$ We have $%
k\left( Z\right) \subseteq k\left( X\right) ^{un}.$ It is seen that $k\left(
Z\right) $ is a finite unramified extension over $k\left( X\right) $.

Let $$\alpha \subseteq k\left( Z\right) \setminus k\left( X\right) $$ be a set
of generators of the field $k\left( Z\right) $ over $k\left( X\right) .$ As $%
\alpha $ is finite and $\Delta $ is infinite, there is a finite set $\beta $
such that $$\alpha \subsetneqq \beta \subsetneqq \Delta .$$

By \emph{Step 2}
we have $X_{\beta }\in X_{qc}\left[ \Omega \right] $ such that $X_{\beta }$
is quasi-galois closed over $Z.$
Hence, $X_{qc}\left[ \Omega \right] $ and $%
X_{et}\left[ \Omega \right] $ are cofinal.

Now by \emph{Steps 1-6} above we have
\begin{equation*}
\begin{array}{l}
\pi _{1}^{et}\left( X,s\right)\\

 =\lim_{\leftarrow _{Z\in X_{et}\left[ \Omega %
\right] }}Aut\left( Z/X\right) \\

\cong \lim_{\leftarrow _{Z\in X_{qc}\left[ \Omega \right] }}Aut\left(
Z/X\right) \\

\cong \lim_{\leftarrow _{Z\in X_{qc}\left[ \Omega \right] }}Gal\left(
k\left( Z\right) /k\left( X\right) \right) \\

=\lim_{\leftarrow _{\alpha \in I}}Gal\left( k\left( X_{\alpha }^{un}\right)
/k\left( X\right) \right) \\

\cong Gal\left( k\left( X_{\infty }^{un}\right) /k\left( X\right) \right) \\

=Gal\left( k\left( X\right) ^{un}/k\left( X\right) \right) .
\end{array}
\end{equation*}

This completes the proof.
\end{proof}

\newpage

\end{document}